\newtheorem{thm}{Theorem}[section]
\newtheorem*{thm*}{Theorem}
\newtheorem{prop}[thm]{Proposition}
\newtheorem*{cor*}{Corollary}
\newtheorem*{example*}{Example}
\newtheorem{lemma}[thm]{Lemma}
\theoremstyle{definition}
\newtheorem{defn}[thm]{Definition}
\theoremstyle{remark}
\newtheorem{remark}[thm]{Remark}
\newcommand{\ii}{\textrm{i}}
\newcommand{\oo}{\textrm{o}}
\newcommand{\la}{\lambda}
\newcommand{\CC}{\mathbb C}
\newcommand{\Id}{\textrm{Id}}
\newcommand{\Ui}{U^\textrm{i}}
\newcommand{\Uo}{U^{\textrm{o}}}
\newcommand{\Ua}{U^\textrm{a}}
\newcommand{\Ub}{U^{\textrm{b}}}
\newcommand{\Wi}{W^\textrm{i}}
\newcommand{\Wo}{W^{\textrm{o}}}
\newcommand{\Wa}{W^\textrm{a}}
\newcommand{\Wb}{W^{\textrm{b}}}
\newcommand{\UiF}{U^{\textrm{i},F}}
\newcommand{\UoF}{U^{\textrm{o},F}}
\newcommand{\Uif}{U^{\textrm{i},f}}
\newcommand{\Uof}{U^{\textrm{o},f}}
\newcommand{\UiG}{U^{\textrm{i},G}}
\newcommand{\UoG}{U^{\textrm{o},G}}
\newcommand{\Uig}{U^{\textrm{i},g}}
\newcommand{\Uog}{U^{\textrm{o},g}}
\newcommand{\WiG}{W^{\textrm{i},G}}
\newcommand{\WoG}{W^{\textrm{o},G}}
\newcommand{\phii}{\phi^{\textrm{i}}}
\newcommand{\phio}{\phi^{\textrm{o}}}
\newcommand{\phiif}{\phi^{\textrm{i},f}}
\newcommand{\phiof}{\phi^{\textrm{o},f}}
\newcommand{\phiig}{\phi^{\textrm{i},g}}
\newcommand{\phiog}{\phi^{\textrm{o},g}}
\newcommand{\phiiF}{\phi^{\textrm{i},F}}
\newcommand{\phioF}{\phi^{\textrm{o},F}}
\newcommand{\phiiG}{\phi^{\textrm{i},G}}
\newcommand{\phioG}{\phi^{\textrm{o},G}}
\newcommand{\psio}{\psi^{o}}
\newcommand{\PhiiG}{\Phi^{\textrm{i},G}}
\newcommand{\PhioG}{\Phi^{\textrm{o},G}}
\newcommand{\PhiaG}{\Phi^{\textrm{a},G}}
\newcommand{\PhiiH}{\Phi^{\textrm{i},H}}
\newcommand{\Psia}{\Psi^{\textrm{a}}}
\newcommand{\Psib}{\Psi^{\textrm{b}}}
\newcommand{\thetai}{\theta_{\textrm{i}}}
\newcommand{\thetao}{\theta_{\textrm{o}}}
\newcommand{\thetaa}{\theta_{\textrm{a}}}
\newcommand{\thetab}{\theta_{\textrm{b}}}
\newcommand{\Gi}{G_{\textrm{i}}}
\newcommand{\Go}{G_{\textrm{o}}}
\newcommand{\Ga}{G_{\textrm{a}}}
\newcommand{\Gb}{G_{\textrm{b}}}
\newcommand{\PsiaG}{\Psi^{\textrm{a},G}}
\newcommand{\PsibG}{\Psi^{\textrm{b},G}}
\newcommand{\PsibH}{\Psi^{\textrm{b},H}}
\newcommand{\Ve}{V_{\epsilon}}
\date{\today}
\author{Liz Vivas}
\address{The Ohio State University\\
Columbus, OH\\
USA}
\email{vivas.3@osu.edu}
\title[Local dynamics of parabolic skew-products]{Local dynamics of parabolic skew-products}
\begin{document}
\bibliographystyle{plain}

\begin{abstract} The local dynamics around a fixed point has been extensively studied for germs of one and several complex variables. In one dimension, there exist a complete picture of the trajectory of the orbits on a whole neighborhood of the fixed point. In dimensions larger or equal than two some partial results are known. In this article we analyze a case that lies in the boundary between one and several complex variables. We consider skew product maps of the form $F(z,w)=(\la(z),f(z,w))$. We deal with the case of \textit{parabolic} skew product maps, that is when $DF(0,0)=\textrm{Id}$. Our goal is to describe the behavior of orbits around a whole neighborhood of the origin. We establish formulas for conjugacy maps in different regions of a neighborhood of the origin.
\end{abstract}

\maketitle

\section{Introduction}\label{section:intro}

The dynamics of skew-product maps $F(z,w) = (\la(z),f(z,w))$ has been studied in the past \cite{Jo, L, PR, PS, PV}. In this article we focus on a local aspect of the study, that is we look at the dynamics of $F$ close to a fixed point. By simplicity, suppose that the fixed point is the origin $(0,0)$. We turn our attention to a class of skew-product map that we call \textit{parabolic}, that is $\la(z)=z+O(|z|^2)$ and $f(z,w) = w +O(|(z,w)|^2)$. 

Skew-product maps are maps in which we can test general theorems for dynamics of self-maps on several dimensions. Since the first coordinate depends only on one coordinate, we can use the results from one complex dynamics to obtain information on one of the variables. Nonetheless, they provide a richer theory than in one dimension. An instance of this can be seen in the recent article by Astorg et al. \cite{ABDPR} in which they describe a skew-product map in two dimensions that has a wandering Fatou component.

We center our study on the following maps:
\begin{align}\label{Fintro}
F: &(\CC^2,0) \to (\CC^2,0)\\
\nonumber F(z,w) &= (\la(z),f(z,w)) 
\end{align}
where $\la(z) = z + a_2z^2 + O(z^3), a_2 \neq 0$ and $f(z,w) = w +b_2w^2 + O((z,w)^3)$, $b_2 \neq 0$.


Our goal is to describe the dynamics of a map above in a neighborhood of the origin. We divide our goals into the following two categories:
\begin{itemize}
\item[A.] Describe regions in which $F$ is conjugated to a simpler map. 
\item[B.] Find formulas for the conjugation map in each region, as in the one dimension case.
\end{itemize}

One classical tool in the study of local dynamics is a conjugacy map. Finding a conjugacy map to a simpler map depends strongly on the type of map we are studying and the dimension of our space. 

Consider $F: (\CC^n,p) \to (\CC^n,p)$ a holomorphic germ with a fixed point $p$. A local conjugacy of $F$ to $G$ is a one-to-one map $\phi: U_p \to \CC^n$ where $U_p \subset \CC^n$ is an open neighborhood around $p$ and where $G=\phi^{-1}\circ F\circ \phi$. In general the goal is to obtain a conjugacy to a map $G$, easier to study than $F$. There is a rich history that goes back to Schroeder about conjugacy. We point out the reader to \cite{Abate} or \cite{Milnor} for a very complete list of results. 

In the case of $F$ a parabolic map, that is $DF(p) = \textrm{Id}$, only some partial results are known. The dynamics of parabolic maps in several dimensions is in general very chaotic \cite{AT,Hak} and although some results have been proven for the case of generic maps, much less is known in general in comparison to the theory in one dimension.

One common feature on the study of parabolic maps is the conjugacy to a translation. While normally this conjugacy cannot be realized on a whole neigborhood around $p$, it is well defined on open sets that have $p$ at its boundary. The conjugacy is commonly referred as a Fatou coordinate. 

Fatou coordinates are useful tools on the study of parabolic maps. In one dimension for instance, it is the main tool in order to study parabolic bifurcation. In recent work of Bedford, Smillie and Ueda \cite{BSU}, they prove parabolic bifurcation results in the semi parabolic case by using Fatou coordinates. 

Let us recall the results in one dimension theory. Consider the map $f(z) = z + a_2z^2 + O(z^3), a_2 \neq 0$, where the origin is a parabolic fixed point for $f$. The Leau Fatou flower theorem states that there exists a parabolic basin $B$ for the origin, that is an open set with the origin at its boundary such that every point converges to the origin after iteration by $f$. There exists in fact a conjugacy of $f$ to the translation map $g(w)=w+1$ in the set $B$. Similarly there exists a repelling basin $R$ converging to $0$ under backward iteration. Likewise we can construct a conjugacy to the translation. Note that the union of $B$ and $R$ contains a full pointed neighborhood of the origin \cite{Milnor}. 

Our goal in this article is to describe the dynamics of a parabolic map in two dimensions in a similar way. That is, we would like to divide an entire neighborhood of the origin in several open sets, in such a way that we can conjugate our parabolic map to simpler maps in each one of those open sets.

Our main results are Theorems \ref{GgeneralWiWo} and \ref{GgeneralWaWb}. Let us summarize the content of the theorem here.

\begin{thm*}\label{MAINTHEOREM} Let $F$ as in \eqref{Fintro}. Then we can describe the dynamics of $F$ on a neighborhood of the $w$-axis. That is, after a change of coordinates for $F$, the following set:
$$
U = \{(z,w)\in\CC^2, |z|<\epsilon, |w|<\epsilon, |w|<|z|^M\} 
$$
where $M$ can be chosen as large as desired, can be divided on several regions such that in each region we have a conjugacy of $F$ to a simpler map.
\end{thm*}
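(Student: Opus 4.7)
The plan is to exploit the skew-product structure by constructing Fatou coordinates in each variable separately: first in the base via the classical one-dimensional Leau--Fatou theorem applied to $\lambda$, then in the fibre via a two-variable analogue in which $z$ appears as a parameter. The resulting pair of coordinates will serve as the conjugacy to a simpler translation-type model $G$ in each of four natural sub-regions of $U$.

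The first step is to apply the one-dimensional flower theorem separately to $\lambda(z) = z + a_2 z^2 + O(z^3)$ and to the restriction $f(0,w) = w + b_2 w^2 + O(w^3)$. Each germ produces an attracting petal and a repelling petal around $0$, together with Fatou coordinates conjugating it to the translation $\zeta \mapsto \zeta + 1$. Intersecting the two petal families with $U$ yields the four regions $W^{\textrm{i}}, W^{\textrm{o}}, W^{\textrm{a}}, W^{\textrm{b}}$ appearing in the statement, one for each pairing of attracting/repelling behaviour in the base and in the fibre. On each region the conjugacy should take the form $\Phi = (\phi_\lambda, \Psi)$, where $\phi_\lambda$ is the base Fatou coordinate of $\lambda$ and $\Psi$ is a fibered Fatou coordinate to be constructed.

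The construction of $\Psi$ follows the classical telescoping recipe,
\[
\Psi(z,w) = \lim_{n\to\infty}\bigl[\,\psi(w_n) - n\,\bigr],
\]
where $w_n$ denotes the $w$-coordinate of $F^n(z,w)$ (with $F^n$ replaced by $F^{-n}$ in the repelling cases) and $\psi$ is an appropriate one-dimensional Fatou coordinate for the fibre germ. The essential difficulty is to show that this limit converges uniformly and holomorphically in $z$, and this is precisely where the hypothesis $|w| < |z|^M$ with $M$ large is exploited: it forces the error in $f(z,w) - (w + b_2 w^2)$ to stay dominated by the leading parabolic contribution $b_2 w_n^2$ along the orbit, so that $w_n$ tracks the one-variable model closely enough for the telescoping series to be summable.

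The hard part is controlling the two parabolic rates simultaneously, since the base orbit approaches $0$ only at rate $|z_n| \sim 1/n$ and the bound $|w_n| \lesssim |z_n|^M$ has to be propagated from the initial datum to all $n$. I would close this by a bootstrap argument carried out in base Fatou coordinates, where $\lambda$ becomes the linear map $\zeta \mapsto \zeta + 1$ and the fibre iteration becomes a perturbation of $\eta \mapsto \eta + 1$ with perturbation summable in $n$; the a priori polynomial dominance of $z$ over $w$ then self-improves under iteration. Once the limit is established, the functional equation $\Psi \circ F = \Psi + 1$ follows from the definition, injectivity is inherited from the one-dimensional Fatou coordinate, and gluing the four regional conjugacies produces the advertised decomposition of $U$.
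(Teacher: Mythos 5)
Your plan for the two ``pure'' regions (base and fibre both attracting, or both repelling) is essentially the paper's: normalize, pass to infinity, straighten the base with its one--dimensional Fatou coordinate, and obtain the fibred coordinate as a telescoping limit. Even there, though, two steps you wave at are where the actual work lies. First, the preliminary normal form: before the condition $|w|<|z|^M$ buys you anything, you must perform the shear/polynomial changes of coordinates that push the mixed terms of $f$ into $O(w^3, zw^2, z^Mw, z^M)$. Without this, a term such as $zw$ is \emph{not} dominated by the parabolic term along the orbit --- in the attracting region $|z_n|\sim|w_n|\sim 1/n$, so $|z_nw_n|\sim 1/n^2\sim|w_n|^2$, i.e.\ it contributes at exactly the leading order and changes the fibre dynamics. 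Second, your claim that the perturbation of $\eta\mapsto\eta+1$ is ``summable in $n$'' is false as stated: at infinity the fibre map has the form $v\mapsto v+1+O(1/u,1/v,\dots)$, and $\sum_n 1/u_n\sim\sum_n 1/n$ diverges. The paper removes these logarithmically divergent terms by a further conjugation $\Theta(u,v)=(u,\,v+\alpha\log u+\beta\log v)$ --- the two--variable analogue of the classical $L_\alpha$ correction --- and only after that does the limit defining the Fatou coordinate converge. Your one--dimensional $\psi$ supplies the $\beta\log v$ part but not the $\alpha\log u$ cross--term, which has no one--variable counterpart.

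The genuine gap is the mixed regions. In $W^{\textrm{a}}$ (base repelling, fibre attracting) and $W^{\textrm{b}}$ (base attracting, fibre repelling), neither $F^n$ nor $F^{-n}$ keeps the orbit inside the region: whichever direction you iterate, one of the two coordinates leaves its petal after finitely many steps. Hence the recipe $\Psi(z,w)=\lim_n\bigl[\psi(w_n)-n\bigr]$ is not merely hard to control --- it is undefined, and there is no conjugacy of $F$ itself to a translation on these regions. What the paper proves instead (Theorems \ref{GspecialWaWb} and \ref{GgeneralWaWb}) is that suitably \emph{renormalized} limits, e.g.\ $\lim_{n\to\infty}T_{(-2n,0)}\circ G^n\circ T_{(n,-n)}$ on $W^{\textrm{b}}$, converge and conjugate the limiting skew action $(-1,g_\infty)$ --- not $G$ --- to the translation $T_{(-1,1)}$. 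So the ``simpler map'' on the mixed regions is of a different nature, and your final step of gluing four conjugacies of $F$ to translations cannot be carried out. You would need to replace it by this renormalization argument, which is the main new content of the two--dimensional theorem beyond the one--dimensional flower picture.
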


While most of the conjugacy maps for the hyperbolic case can be obtained as a limit of iterates of our maps, Fatou coordinates are in general not so easily computed.  In this article we give formulas for Fatou coordinates for the class of skew-product parabolic maps as in \eqref{Fintro}. 

The article is organized as follows: In the following section we write down properties of Fatou coordinates, namely how do they change after changes of coordinates. In section 3 we recall results in one dimension. In section 4 we recall results from \cite{Vi14} where we found a complete description of the dynamics of a more particular class of parabolic maps on a whole neighborhood of the origin. Finally in the last section we prove the main theorem.

\medskip
\noindent

\section{Fatou coordinates}

Since we use Fatou coordinates of different maps throughout our article we write here the main definitions as well as properties. We work on the most general possible case. In the following sections we use the results from this section.

Let $F: (\CC^n,p) \to (\CC^n,p), n\geq 1,$ a holomorphic germ with a fixed point $p$ such that $DF(p) = \Id$ and $F\neq \Id$.
 Along our article we alternate between our fixed point $p$ being the origin and the point at infinity. The hypotheses on the derivative of $F$ guarantees that there is a local well defined inverse holomorphic germ $F^{-1}$ on a neighborhood of $p$. 				

Let $\zeta \in \CC^k$ then we write $T_{\zeta}: \CC^k \to \CC^k$ the translation map $T_{\zeta}(z)= z+\zeta$. Assume from now on $\zeta \neq 0$.

\begin{defn}\label{incoming} Let $\UiF \subset \CC^n$ an open set such that $p \in  \partial \UiF$ and $F(\UiF) \subset \UiF$. Then we say $\UiF$ is an \textit{attracting basin} of $F$. \\
Let $\UiF \subset \CC^n$ be an attracting basin of $F$. Assume we have a map $\phiiF: \UiF \to \CC^k$ a holomorphic map such that the following equation is satisfied:
\begin{align}
\phiiF(F(z))= \phiiF(z)+\zeta\qquad \textrm{or equivalently}\qquad\phiiF \circ F = T_\zeta \circ \phiiF:
\end{align}  
\begin{align*}
\begin{CD} 
\UiF @>F>> \UiF \\ 
@V\phiiF VV  @V\phiiF VV   \\ 
\CC^k @>T_{\zeta}>>  \CC^k
\end{CD}
\end{align*}

where $0\neq\zeta \in \CC^k$, then we say $\phiiF$ is an \textit{incoming Fatou map} for $F$ and $\UiF$ with translation $T_\zeta$.
\end{defn}
\begin{remark}\label{multiple}
If $\phiiF$ is an incoming Fatou map for $F$ and $\UiF$ with translation $T_\zeta$, then $\lambda\phiiF$ is an incoming Fatou map for $F$ and $\UiF$ with translation $T_{\lambda\zeta}$ for any $0\neq\lambda \in \CC$.
\end{remark}
Repelling basins as well as repelling Fatou maps are defined by considering the local inverse map $F^{-1}$.

\begin{defn}\label{outgoing}
Assume $\UoF \subset \CC^n$ an open set such that $p \in  \partial \UoF$ and $F^{-1}(\UoF) \subset \UoF$, that is $\UoF$ is an open attracting basin of $F^{-1}$. Then we say $\UoF$ is a \textit{repelling basin} of $F$.\\
Assume there exists an incoming Fatou map $\psi$ for $F^{-1}$ and $\UoF$ with translation $T_{-\zeta}$.
\begin{align*}
\begin{CD} 
\UoF @>F^{-1}>> \UoF \\ 
@V\psi VV  @V\psi VV   \\ 
\CC^k @>T_{-\zeta}>>  \CC^k
\end{CD}
\end{align*}

Under the assumption of $n=k$, assume we have a local inverse map for $\psi$, then we call this map $$\phioF:  \psi(\UoF) \subset \CC^k\to \UoF$$ the \textit{outgoing Fatou map} for $F$ and $\Uo$ with respect to $T_\zeta$.
Using the functional equation satisfied by $\psi$ and $F^{-1}$ we can easily see that:
\begin{align}
F(\phioF(z))= \phioF(z+\zeta)\qquad \textrm{or equivalently}\qquad F\circ\phioF = \phioF \circ T_\zeta.
\end{align}  
\end{defn}

We point out some remarks about the definitions above.
\begin{remark}
When $n=k=1$, we can assume without loss of generality that $\zeta=1$ and the incoming/outgoing change of coordinates is well-known as \textit{incoming/outgoing Fatou coordinates}. 
When $n=k\geq 2$ we could in principle simply use many copies of Fatou coordinates for $k=1$. We demand in fact one more condition, that our maps should be injective (necessary for the outgoing Fatou map), and we call them also \textit{incoming/outgoing Fatou coordinates}. 
\end{remark}
\begin{remark} When $n \geq 2$ and $k=1$, the incoming change of coordinate has been used in the past to prove the existence of Fatou-Bieberbach maps for automorphisms of $\CC^n$ \cite{Hak,Vi12}.
\end{remark}
\begin{remark}
It is easy to see that Fatou coordinates are not unique. From the functional equations we see that compositions (resp. pre-compositions) of translations with incoming (resp. outgoing) Fatou coordinates are also incoming  (resp. outgoing) Fatou coordinates. 	
\end{remark}
When there is no danger of confusion about the map $F$, we simply write $\phii$ and $\phio$. For now though, we stick with the superscript referring to the map in question, since we want to establish how do they change when changing coordinates for our map $F$.

\begin{prop}\label{inversemap} Let $F$ be a germ as above. Assume $F$ and $F^{-1}$ have attracting basins $\UiF$ ad $\UoF=U^{\ii,F^{-1}}$. Then $F^{-1}$ also has a repelling basin, namely $U^{\oo,F^{-1}}=\UiF$. 
Let also $\phiiF$ (resp. $\phioF$) an incoming (resp. outgoing) Fatou coordinates for $F$ and $\UiF$ with respect to $T_\zeta$. Then the following 
\begin{align}
\phi^{o,F^{-1}}(z) = (\phi^{\textrm{i},F})^{-1}(-z),\qquad\phi^{\textrm{i},F^{-1}}(z) = -(\phi^{o,F})^{-1}(z)
\end{align}
give us outgoing (resp. incoming) Fatou maps for $F^{-1}$ and $U^{\oo,F^{-1}}$ (resp. $U^{\ii,F^{-1}}$) with respect to $T_\zeta$.
\end{prop}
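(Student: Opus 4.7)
The plan is simply to unravel Definitions~\ref{incoming} and~\ref{outgoing} together with Remark~\ref{multiple}, and to verify the relevant functional equations. The argument is essentially bookkeeping, so the main (minor) obstacle is keeping track of the signs and of which direction of translation each formula is supposed to satisfy.

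First I would establish that $U^{\oo,F^{-1}}=\UiF$ is a repelling basin of $F^{-1}$. By hypothesis $F(\UiF)\subset\UiF$ with $p\in\partial\UiF$, and since $F=(F^{-1})^{-1}$ this reads $(F^{-1})^{-1}(\UiF)\subset\UiF$, which is precisely Definition~\ref{outgoing} applied to $F^{-1}$.

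Next, for the outgoing formula $\phi^{o,F^{-1}}(z)=(\phiiF)^{-1}(-z)$: according to Definition~\ref{outgoing} applied to $F^{-1}$ with repelling basin $\UiF$ and translation $T_\zeta$, what is required is an incoming Fatou map $\psi$ for $(F^{-1})^{-1}=F$ on $\UiF$ with translation $T_{-\zeta}$. Remark~\ref{multiple} (applied with $\lambda=-1$) furnishes $\psi=-\phiiF$. The outgoing Fatou map for $F^{-1}$ is then, by definition, the local inverse of this $\psi$, which is exactly $z\mapsto (\phiiF)^{-1}(-z)$.

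Finally, for the incoming formula $\phi^{\ii,F^{-1}}(z)=-(\phioF)^{-1}(z)$ I would verify the functional equation $\phi^{\ii,F^{-1}}\circ F^{-1}=T_\zeta\circ\phi^{\ii,F^{-1}}$ directly from the defining relation $F\circ\phioF=\phioF\circ T_\zeta$. Setting $w=\phioF(z)$ on both sides of this relation immediately yields
\begin{align*}
(\phioF)^{-1}\circ F=T_\zeta\circ(\phioF)^{-1},
\end{align*}
and composing with $F^{-1}$ on the right (or equivalently applying $T_{-\zeta}$ on the left) gives $(\phioF)^{-1}\circ F^{-1}=T_{-\zeta}\circ(\phioF)^{-1}$. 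Multiplying by $-1$ and using the identity $-T_{-\zeta}(x)=T_\zeta(-x)$ produces exactly $(-(\phioF)^{-1})\circ F^{-1}=T_\zeta\circ(-(\phioF)^{-1})$, which is the incoming Fatou condition for $F^{-1}$. The domain is $\UoF=U^{\ii,F^{-1}}$, since $(\phioF)^{-1}$ is defined on the image of $\phioF$, namely $\UoF$.
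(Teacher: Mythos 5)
Your proposal is correct and is exactly the verification the paper leaves to the reader: it checks the functional equations directly and invokes Remark \ref{multiple} with $\lambda=-1$ to handle the sign, which is precisely the route the paper's one-line proof indicates. The sign bookkeeping in both formulas checks out.
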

\begin{proof}
The proof follows by verifying the respective equations and using Remark \ref{multiple}.
\end{proof}
\begin{prop}\label{cofc}
Let $\eta$ be a (local) change of coordinates between $F$ and $G$ as in the following commutative diagram:
\begin{align}
\begin{CD} 
(\CC^n,p) @>F>> (\CC^n,p) \\ 
@A\eta AA  @A\eta AA   \\ 
(\CC^n,q) @>G>>  (\CC^n,q)
\end{CD}
\end{align}
Assume we have $\UiF$ and $\UoF$, attracting/repelling basins for $F$ along with $\phiiF$ and $\phioF$ Fatou coordinates for $F$, then we can also find attracting/repelling basins for $G$ as well as incoming/outgoing Fatou coordinates for $G$ as follows:
\begin{align}\label{coc}
\UiG &= \eta^{-1}(\UiF),\qquad\UoG = \eta^{-1}(\UoF),\\
\nonumber\phiiG &= \phiiF \circ \eta, \qquad\phioG = \eta^{-1} \circ \phioF. 
\end{align}
where $\phiiG$ (resp. $\phioG$) is defined in $\UiG$ (resp. $\UoG$).
\end{prop}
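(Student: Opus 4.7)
The plan is to verify each of the three claims (basin containment, incoming Fatou equation, outgoing Fatou equation) by direct chasing of the commutative diagram, using that the conjugacy relation $F \circ \eta = \eta \circ G$ equivalently reads $G = \eta^{-1}\circ F\circ \eta$ on the appropriate neighborhoods.

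First I would check that $U^{\ii,G}:=\eta^{-1}(U^{\ii,F})$ is an attracting basin for $G$. Since $\eta$ is a local biholomorphism sending $q$ to $p$, the point $q$ lies on $\partial U^{\ii,G}$. For forward invariance, I compute
\begin{align*}
G(U^{\ii,G}) = \eta^{-1}\!\circ\! F\!\circ\! \eta\,(\eta^{-1}(U^{\ii,F})) = \eta^{-1}(F(U^{\ii,F})) \subset \eta^{-1}(U^{\ii,F}) = U^{\ii,G}.
\end{align*}
The analogous computation with $F^{-1}$ and $G^{-1}$ (which exist as local inverses since $DF(p)=\Id$ forces $DG(q)=\Id$) gives that $U^{\oo,G}:=\eta^{-1}(U^{\oo,F})$ is a repelling basin for $G$.

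Next I would verify the incoming Fatou equation for $\phi^{\ii,G}:=\phi^{\ii,F}\circ \eta$, which is holomorphic on $U^{\ii,G}$:
\begin{align*}
\phi^{\ii,G}\circ G = \phi^{\ii,F}\circ \eta\circ G = \phi^{\ii,F}\circ F\circ \eta = T_\zeta\circ \phi^{\ii,F}\circ \eta = T_\zeta\circ \phi^{\ii,G}.
\end{align*}
For the outgoing coordinate, by Definition \ref{outgoing}, $\phi^{\oo,F}$ is the local inverse of an incoming Fatou map $\psi$ for $F^{-1}$ on $U^{\oo,F}$. Applying the incoming part (already proved) to $F^{-1}$ and $G^{-1}$, the map $\psi\circ \eta$ is an incoming Fatou map for $G^{-1}$ on $U^{\oo,G}$ with translation $T_{-\zeta}$; its local inverse is $\eta^{-1}\circ \psi^{-1} = \eta^{-1}\circ \phi^{\oo,F}=:\phi^{\oo,G}$, which is therefore an outgoing Fatou map for $G$. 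Alternatively one checks directly
\begin{align*}
G\circ \phi^{\oo,G} = \eta^{-1}\circ F\circ \eta\circ \eta^{-1}\circ \phi^{\oo,F} = \eta^{-1}\circ F\circ \phi^{\oo,F} = \eta^{-1}\circ \phi^{\oo,F}\circ T_\zeta = \phi^{\oo,G}\circ T_\zeta.
\end{align*}

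There is no real obstacle here: the proposition is a formal consequence of the functional equations and the fact that $\eta$ and $\eta^{-1}$ are both biholomorphic between the relevant open sets. The only minor care needed is to shrink the basins, if necessary, to the domain of definition of $\eta$ (and correspondingly of $\eta^{-1}$) so that compositions in \eqref{coc} make sense; this is routine since we are working with germs.
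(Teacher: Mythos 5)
Your verification is correct and is exactly the routine diagram chase that the paper has in mind when it declares the proof ``immediate'' (the paper gives no details at all). Nothing further is needed.
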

The proof is immediate. 

\begin{remark} One observation that we will use repeatedly on the next sections is the following: on the last proposition we do not need $\eta$ to be well defined on a whole neighborhood of the origin. In fact $\eta$ can be defined only on $\UiG$ or similarly only on $\UoG$. 
\end{remark}




\section{Fatou coordinates in one dimension}

Consider a germ at the origin of the following form:
$$
f(z)=z+ az^2 + O(|z|^3),
$$
where $a \neq 0$ a parabolic germ at the origin. By a simple change of coordinates we can always assume $a=-1$. The following is the classic theorem of Leau and Fatou. See \cite{Milnor} for details.

\begin{thm}(Leau-Fatou Theorem) Assume $f$ as above. Then there exists $\Uif$ and $\Uof$ for $f$, such that $\Uif \cup \Uof$ form a punctured neighborhood of the origin, as well as incoming and outgoing Fatou coordinates $\phiif:\Uif\to\CC$ as well as $\phiof:\psi(\Uof)\to\Uof$.  
\end{thm}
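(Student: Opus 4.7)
The plan is to reduce the local problem to an asymptotic one near infinity and then build the Fatou coordinate as a regularized limit of iterates. After the preliminary normalization $f(z) = z - z^2 + O(z^3)$, I pass to the chart $w = 1/z$, where a direct computation gives
$$
F(w) \;:=\; \frac{1}{f(1/w)} \;=\; w + 1 + O(1/w)
$$
for $|w|$ large. The natural candidate for an attracting petal is the preimage under $z \mapsto 1/z$ of a right half-plane $H_R^+ = \{\R(w) > R\}$: the estimate $\R(F(w)) \geq \R(w) + 1 - C/|w|$ shows $F(H_R^+) \subset H_R^+$ once $R$ is large, and iterating gives $F^n(w) = w + n + O(\log n)$ on $H_R^+$.

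On $H_R^+$ I would then construct the incoming Fatou coordinate. After one further polynomial change of coordinate (chosen to kill the residual $c/w$ term, so that $F$ is brought to the form $w+1+O(1/w^2)$), the limit
$$
\Phi(w) \;=\; \lim_{n\to\infty}\bigl(F^n(w) - n\bigr)
$$
converges uniformly on compact subsets of $H_R^+$, since the telescoping increments $F(F^n(w)) - F^n(w) - 1$ are now $O(1/n^2)$. The limit $\Phi$ is holomorphic and satisfies $\Phi(F(w)) = \Phi(w) + 1$. Pulling back via $w = 1/z$ and undoing the preliminary normalizations using Proposition \ref{cofc} yields an attracting basin $\Uif$ and the incoming Fatou coordinate $\phiif : \Uif \to \CC$ with $\phiif \circ f = \phiif + 1$.

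For the outgoing side I would apply the identical construction to $f^{-1}(z) = z + z^2 + O(z^3)$, which is again parabolic at the origin: its own attracting basin and incoming Fatou coordinate are produced exactly as above, but now centered on the opposite axis in the $w$-chart. By Proposition \ref{inversemap} this basin is precisely what we want to call $\Uof$, and negating and inverting the incoming coordinate for $f^{-1}$ delivers the outgoing Fatou map $\phiof : \psi(\Uof) \to \Uof$ satisfying $f \circ \phiof = \phiof \circ T_1$, as required in Definition \ref{outgoing}.

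The remaining point is to show that $\Uif \cup \Uof$ covers a punctured neighborhood of $0$. In the $w$-chart the two opposite half-planes $H_R^\pm$ alone leave a vertical strip uncovered, so I would enlarge each petal to a sector of the form $S_\delta^{\pm} = \{|\Arg(\pm w)| < \tfrac{\pi}{2}+\delta,\ |w| > R_\delta\}$: the invariance $F(S_\delta^+) \subset S_\delta^+$ (and the analogous one for $F^{-1}$ on $S_\delta^-$) and the convergence of $\Phi$ both survive for $\delta$ small and $R_\delta$ correspondingly large, by the same $O(1/|w|)$ estimates used above. Since $S_\delta^+ \cup S_\delta^-$ covers the complement of a large disk in the $w$-plane, its image under $z = 1/w$ is the required punctured neighborhood of the origin. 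The main technical obstacle throughout is the convergence of the defining limit for $\Phi$: the naive $O(1/|w|)$ error in $F$ produces a non-summable $O(1/n)$ telescoping correction, so the preliminary polynomial normalization (or, equivalently, a logarithmic subtraction in the limit) is indispensable.
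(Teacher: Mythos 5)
Your overall architecture is the standard Leau--Fatou argument and matches what the paper actually does (the paper states the theorem with a citation to Milnor and then implements the construction at infinity in the proposition that follows, via the sets $S_R$ and the limits \eqref{onein}--\eqref{oneout}). However, one step of your route, as literally stated, fails: you cannot kill the residual $c/w$ term in $F(w)=w+1+c/w+O(1/w^2)$ by a ``polynomial change of coordinate.'' The coefficient $c$ of $1/w$ (equivalently, the coefficient of $z^3$ in the normal form $z-z^2+cz^3+\cdots$, the iterative residue) is a formal conjugacy invariant: no polynomial, nor even formal power-series, change of variable tangent to the identity removes it. This is precisely why the paper introduces the logarithmic conjugation $L_{\alpha}(w)=w+\alpha\log(w)$ in its Lemma, which brings $g$ to the form $w+1+O(1/w^{1+\epsilon})$ with a summable telescoping error, and why the limit \eqref{onein} is taken of $L_{-\alpha}(g^n(w))-n$ rather than of $g^n(w)-n$. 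You do name the correct fix in your closing sentence (``a logarithmic subtraction in the limit,'' i.e.\ $\lim_n\bigl(F^n(w)-n-\alpha\log n\bigr)$), but you present it as equivalent to the polynomial normalization, which it is not; the polynomial version is impossible, and with it the claimed $O(1/n^2)$ bound on the increments collapses to a non-summable $O(1/n)$. Replace that normalization by the $L_\alpha$-conjugation (or the $\alpha\log n$ subtraction) and the rest of your argument --- half-plane invariance, the telescoping limit for $\phiif$, the passage to $f^{-1}$ and Proposition \ref{inversemap} for $\phiof$, and the enlargement of the petals to sectors of half-angle exceeding $\pi/2$ so that $\Uif\cup\Uof$ covers a punctured neighborhood --- goes through as you describe.

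One smaller point: the sector $\{|\Arg(w)|<\pi/2+\delta,\ |w|>R\}$ is not literally forward invariant under $w\mapsto w+1+O(1/w)$ (near the boundary rays in the left half-plane the modulus can decrease), so the invariance claim needs either the paper's formulation in the $z$-chart (the petals $\Ve$ and $-\Ve$ with half-angle $3\pi/4$) or a region of the form $\{\textrm{Re}(w)>R-\eta|\textrm{Im}(w)|\}$. This is a routine adjustment, not a conceptual gap.
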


Before we continue, we write down an explicit choice for the sets $\Uif$ and $\Uof$:
Let
$$
\Ve = \{ \zeta \in \CC, |\zeta|<\epsilon, |\textrm{Arg}(\zeta)| < 3\pi/4\}
$$
then $\Uif = \Ve$ for $\epsilon$ small enough and similarly we can see that $\Uof= -\Ve$. 

\begin{figure}[h]
\def\svgwidth{4in}{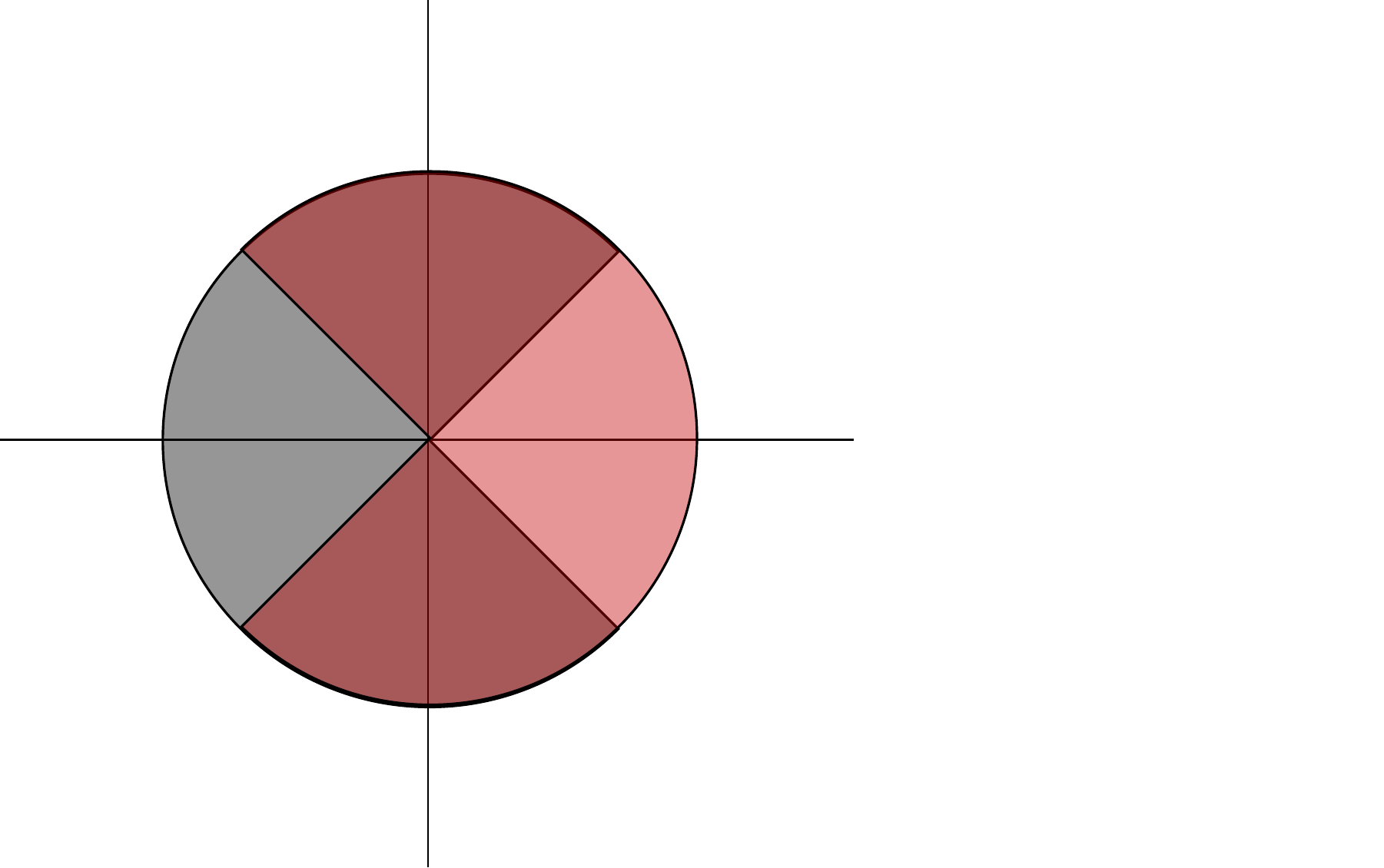}
\caption{attracting and repelling regions for $f$}
\label{fig:onedimension}
\end{figure}

We translate all the action to a neighborhood of $\infty$, using the inverse map $I(z) = 1/z$. We obtain $g(w) = w + 1 +\frac{\alpha}{w} +O(1/w^2)$ where $g=I \circ f\circ I$. Our fixed point is taken to be the infinity point. Using proposition \ref{cofc} we see that $\Uig$ and $\Uog$ can be obtained as the map $I$ applied to $\Uif$ and $\Uof$ respectively.
 Then $\phiig$ and $\phiog$ can be obtained as follows.
\begin{prop} Let $g(w) = w + 1 +\frac{\alpha}{w} +O\left(\frac{1}{w^2}\right)$, then we can find the incoming Fatou coordinate $\phiig: \Uig \to \CC$
of $g$ as the following limit:
\begin{align}\label{onein}
\phiig(w)&:= \lim_{n\to\infty}L_{-\alpha}(g^n(w))-n
\end{align}
Similarly we can find an extension to all of $\CC$ of the outgoing Fatou coordinate $\phiig: \CC \to \CC$ by using the following limit:
\begin{align}\label{oneout} 
\phiog(w)&:=\lim_{n\to\infty} g^n(L_\alpha(w-n))
\end{align} 
where $L_{\alpha}(w) = w + \alpha \log(w)$. 
\end{prop}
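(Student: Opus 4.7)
My plan is to handle the two formulas separately, each by a telescoping Cauchy argument that exploits a cancellation produced by the logarithmic correction term in $L_{\pm\alpha}$.

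For the incoming formula \eqref{onein}, I would set $\phi_n(w) := L_{-\alpha}(g^n(w)) - n$ and aim to show this sequence is Cauchy uniformly on compact subsets of $\Uig$. Writing $u = g^n(w)$, a direct Taylor expansion gives
\begin{align*}
L_{-\alpha}(g(u)) - L_{-\alpha}(u) &= (g(u)-u) - \alpha\log(g(u)/u) \\
&= 1 + \tfrac{\alpha}{u} + O(1/u^2) - \alpha\log\!\bigl(1 + \tfrac{1}{u} + \tfrac{\alpha}{u^2} + O(1/u^3)\bigr).
\end{align*}
Expanding $\log(1+x) = x - x^2/2 + \cdots$, the $\alpha/u$ term cancels and we get $L_{-\alpha}(g(u)) - L_{-\alpha}(u) - 1 = O(1/u^2)$. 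Combined with the standard Leau--Fatou estimate $g^n(w) \sim n$ in the attracting basin, this yields $\phi_{n+1}(w) - \phi_n(w) = O(1/n^2)$, so $\phi_n$ converges uniformly on compact subsets of $\Uig$. The functional equation $\phiig \circ g = \phiig + 1$ follows from the definition by reindexing.

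For the outgoing formula \eqref{oneout}, I would argue symmetrically. An analogous Taylor expansion gives the dual identity
$$g(L_\alpha(z)) = L_\alpha(z+1) + O(1/z^2),$$
again because the $\alpha/z$ terms coming from $g$ and from $\log((z+1)/z)$ exactly cancel. Setting $\psi_n(w) := g^n(L_\alpha(w-n))$ and writing
$$\psi_{n+1}(w) - \psi_n(w) = g^n\bigl(g(L_\alpha(w-n-1))\bigr) - g^n(L_\alpha(w-n)),$$
the dual identity tells us that the two inputs to $g^n$ differ by $O(1/(n-w)^2) = O(1/n^2)$. Hence
$$\psi_{n+1}(w) - \psi_n(w) = O(1/n^2) \cdot \sup |(g^n)'|,$$
where the supremum is taken along the segment joining the two inputs. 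The functional equation $g \circ \phiog = \phiog \circ T_1$ is then immediate by reindexing once convergence is established.

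The main obstacle is bounding the derivative factor $(g^n)'$ along a trajectory that starts near $-\infty$ and lands in a bounded region. I would show that $(g^n)'(L_\alpha(w-n)) = \prod_{k=0}^{n-1} g'(g^k(L_\alpha(w-n)))$ is uniformly bounded: since $g'(\zeta) = 1 + O(1/\zeta^2)$ and along the orbit $g^k(L_\alpha(w-n)) \sim -(n-k)$, the product becomes $\prod_{k=0}^{n-1}\bigl(1 + O(1/(n-k)^2)\bigr)$, which is convergent uniformly in $n$. This requires checking that the orbit stays in the repelling basin of $g$ (equivalently, the attracting basin of $g^{-1}$), which follows from the standard petal description. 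Once this derivative bound is in place, both formulas drop out by summing the Cauchy estimates.
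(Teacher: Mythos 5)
Your argument is correct, and it reaches the conclusion by a genuinely more hands-on route than the paper. The paper's proof conjugates $g$ by $L_{-\alpha}$ (resp.\ $L_{\alpha}$) to obtain maps $\rho=L_{-\alpha}\circ g\circ (L_{-\alpha})^{-1}$ and $\tau=(L_{\alpha})^{-1}\circ g\circ L_{\alpha}$ of the normalized form $w\mapsto w+1+O(1/w^{1+\epsilon})$, and then invokes the standard one-variable fact that $\lim_n \rho^n(w)-n$ and $\lim_n \tau^n(w-n)$ converge for such maps; the stated formulas follow by undoing the conjugation. The computation you call the ``cancellation of the $\alpha/u$ term'' is exactly the content of the paper's lemma asserting that $\rho$ and $\tau$ have this normalized form, so the key identity is the same in both proofs; what differs is how convergence is then extracted. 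Your telescoping is self-contained and makes the mechanism explicit, but for the outgoing coordinate it forces you to supply the bound on $(g^n)'$ along the backward orbit, which the conjugation approach hides inside the cited one-dimensional result. Two small imprecisions in your write-up, neither fatal: the dual identity actually gives $g(L_\alpha(z))-L_\alpha(z+1)=O(\log z/z^2)$ rather than $O(1/z^2)$ (still summable, so the Cauchy estimate survives); and the asymptotic $g^k(L_\alpha(w-n))\sim -(n-k)$ degenerates for the last boundedly many steps, where you should instead use that the orbit remains in the repelling petal $-S_R$, hence has modulus at least $R$, so those finitely many factors of the product $\prod_k g'(g^k(\cdot))$ contribute only a uniform constant. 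With those two points made explicit, your proof is complete and arguably more informative than the paper's, at the cost of redoing estimates that the normalization argument outsources.
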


\begin{proof}
We start by proving equation \eqref{onein}. Note first that $\Uig = S_R=\{|w|>R, |\textrm{Arg}|<3\pi/4\}$, where $R=1/\epsilon$. 
In this set the map $L_{\alpha}$ is well defined an injective for any $\alpha \in \CC$. 
We use proposition 2.4 and the following lemma.

\begin{lemma} Consider $g$ as above, let $\rho=L_{-\alpha}\circ g \circ (L_{-\alpha})^{-1}$ defined on $W=L_{-\alpha}(\Uig)$ then $\rho(W)\subset W$ and $\rho(w)=w+1+O(\frac{1}{w^{1+\epsilon}})$. 
Similarly, consider $\tau=(L_{\alpha})^{-1}\circ g \circ L_{\alpha}$ defined on $V=(L_{\alpha})^{-1}(\Uog)$ then $\tau(V) \supset V$ and $\tau(w)=w+1+O(\frac{1}{w^{1+\epsilon}})$. 
\end{lemma}
\begin{proof}
Note that $L_\alpha$ and $L_{-\alpha}$ are one-to-one maps on $\Uig$ and $\Uog$. The rest of the assertions are immediate.
\end{proof}
Then $\phi^{\textrm{i},\rho}(w):= \lim_{n\to\infty}\rho^n(w)-n$ and similarly $\phi^{o,\tau}(w):= \lim_{n\to\infty}\tau^n(w-n)$
clearly converge. Using the fact that $\phiig=\phi^{\textrm{i},\rho}\circ L_{-\alpha}$ and $\phiog=L_\alpha \circ \phi^{o,\tau}$ we see that the limits on the statement of the proposition converge. 
\end{proof}




\section{Fatou coordinates in two dimensions}

Let us recall our results for a skew parabolic map $F$ of a particular form considered in \cite{Vi14}:
\begin{align}\label{Fspecial}
F(z,w) = \left(\frac{z}{1+z},f_z(w)\right) = \left(\frac{z}{1+z},w-w^2+w^3+O(w^4,z^4)\right)
\end{align}

Let $\Ve = \{ \zeta \in \CC, |\zeta|<\epsilon, |\textrm{Arg}(\zeta)| < 3\pi/4\}$. In one dimension, the union $\Ve \cup (-\Ve)$ forms a punctured neighborhood of the origin. In two dimension, to cover a full neighborhood of the origin we define the following four sets:
\begin{align}\label{regionsinU}
\Ui = \Ve \times \Ve, \Uo = (-\Ve)\times(-\Ve), \Ua= (-\Ve)\times \Ve, \Ub = \Ve\times(-\Ve), 
\end{align}
Then $\Ui \cup \Uo\cup\Ua\cup\Ub$ together with both axes, form a neighborhood of the origin. Since $F$ preserves the axis, then we can describe the dynamics in each axes by using the one dimensional results on parabolic maps.

As in the one dimensional case, we change variables so the fixed point is at infinity by using the map $I(z,w)=(1/z,1/w)$. Let $G = I\circ F\circ I$.
\begin{align}\label{Gspecial}
G(u,v) = \left(u+1,g_u(v)\right) = \left(u+1,v+1+O\left(\frac{1}{v^2},\frac{1}{uv^2}\right)\right)
\end{align}
Let $S_R=\{|\zeta|>R, |\textrm{Arg}(\zeta)|<3\pi/4\}$ where $R$ is large, since $I(\Ve)=S_R$. Then we will focus our study on the following four sets:
\begin{align}\label{regionsinW}
\Wi = S_R \times S_R, \Wo = -S_R\times-S_R, \Wa= -S_R\times S_R, \Wb =  S_R\times-S_R.
\end{align}

Denote $T_{(a,b)}: \CC^2 \to \CC^2$ defined as $T_{(a,b)}(z,w)=(z+a,w+b)$.

\begin{thm}
Let $G$ be as in \eqref{Gspecial}. Then in each region we have that the following limits exist:
\begin{itemize}
\item[(a)] For any $p \in \Wi$, then $G^n(p)$ converges to infinity. We have that $\PhiiG$ the Fatou coordinate is given by $\PhiiG:= \lim_{n\to\infty}T_{(-n,-n)} \circ  G^n$ and we have the following diagram:
\begin{align*}
\begin{CD} 
\Wi @>G>> \Wi \\ 
@V\PhiiG VV  @V \PhiiG VV   \\ 
\CC^2 @>T_{(1,1)}>>  \CC^2
\end{CD}
\end{align*}
\item[(b)] For any $p \in \Wo$, then $G^{-n}(p)$ converges to infinity. We have that $\PhioG$ the Fatou coordinate is given by $\PhioG:= \lim_{n\to\infty}G^n\circ T_{(-n,-n)}$ and we have the following diagram:
\begin{align*}
\begin{CD} 
G^{-1}(\Wo) @>G>> \Wo \\ 
@A\PhioG AA  @A \PhioG AA   \\ 
N \subset\CC^2 @>T_{(1,1)}>>  T_{(1,1)}(N) \subset \CC^2
\end{CD}
\end{align*}
where $N=(\PhioG)^{-1}(G^{-1}(\Wo))$ and $T_{(1,1)}(N) = (\PhioG)^{-1}(\Wo)$.
\end{itemize}
\end{thm}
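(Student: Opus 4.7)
The plan is to treat parts (a) and (b) in sequence, with (b) essentially reducing to (a) applied to the inverse map via Proposition \ref{inversemap}. Throughout, I will exploit the trivial dynamics in the first coordinate and quantitative control of the second.

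For part (a), I would first observe that the first coordinate is trivially handled: since $G$ acts on the $u$-slot as $u\mapsto u+1$, one has $\pi_1(G^n(u,v))=u+n$, so $\pi_1\bigl(T_{(-n,-n)}\circ G^n(u,v)\bigr)=u$ for every $n$. The entire content of the statement is therefore about the second coordinate. Writing $(u_n,v_n)=G^n(u,v)$, one has $u_n=u+n$ and
\begin{align*}
v_{n+1}=g_{u_n}(v_n)=v_n+1+O\!\left(\tfrac{1}{v_n^2},\tfrac{1}{u_n v_n^2}\right).
\end{align*}
The first task is to show that $G(\Wi)\subset \Wi$ for $R$ large enough. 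Since $S_R$ is the image under $I$ of the one-dimensional attracting petal $\Ve$, the invariance $g_u(S_R)\subset S_R$ (uniformly in $u\in S_R$) follows from the standard parabolic argument: the real part increases by approximately $1$ at each step, and the error $O(1/v_n^2)$ is too small to push the argument out of the sector $|\Arg(v)|<3\pi/4$ once $|v|>R$. A key quantitative consequence, established by an elementary induction, is the lower bound $|v_n|\geq c(n+|v|)$ for some $c>0$ uniform on compact subsets of $\Wi$.

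With the invariance and lower bound in hand, convergence follows from a telescoping estimate. Setting $\xi_n(u,v):=v_n-n$, one has
\begin{align*}
\xi_{n+1}-\xi_n=v_{n+1}-v_n-1=O\!\left(\tfrac{1}{|v_n|^2}\right)=O\!\left(\tfrac{1}{n^2}\right),
\end{align*}
locally uniformly on $\Wi$, so $\sum_n|\xi_{n+1}-\xi_n|$ converges and $\xi_n$ converges locally uniformly to a holomorphic function. This gives the limit $\PhiiG$, and the functional equation $\PhiiG\circ G=T_{(1,1)}\circ\PhiiG$ is immediate from the telescoping definition: $\PhiiG(G(u,v))=\lim T_{(-n,-n)}G^{n+1}(u,v)=\lim T_{(1,1)}\circ T_{(-(n+1),-(n+1))}G^{n+1}(u,v)=T_{(1,1)}\PhiiG(u,v)$.

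For part (b), I would apply (a) to $G^{-1}$. On $\Wo=-S_R\times -S_R$, the map $G^{-1}$ has the form $(u,v)\mapsto(u-1,v-1+O(1/v^2,1/(uv^2)))$, and a symmetric argument (change sign of the sectors) gives an incoming Fatou coordinate $\psi^{i,G^{-1}}=\lim T_{(n,n)}\circ G^{-n}$ on $\Wo$. Then I would set $\PhioG:=(\psi^{i,G^{-1}})^{-1}$ as in Definition \ref{outgoing}; rewriting this as a direct limit yields exactly $\PhioG=\lim_{n\to\infty} G^n\circ T_{(-n,-n)}$. Concretely, for $(a,b)$ in the image of $\psi^{i,G^{-1}}$, the point $T_{(-n,-n)}(a,b)=(a-n,b-n)$ eventually lands in $\Wo$, its $G$-orbit remains controlled for exactly $n$ steps (the time it takes to traverse back through the repelling basin to a neighborhood of $G^{-1}(\Wo)$), and the same telescoping bound shows $G^n\circ T_{(-n,-n)}(a,b)$ converges. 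The functional equation $G\circ\PhioG=\PhioG\circ T_{(1,1)}$ then follows by shifting the index in the limit, and the sets $N$ and $T_{(1,1)}(N)$ in the diagram are precisely the preimages under $\PhioG$ of $G^{-1}(\Wo)$ and $\Wo$ respectively.

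The main obstacle is the first step: verifying forward invariance $G(\Wi)\subset \Wi$ (and the analogous backward invariance for $G^{-1}$) on the product sector $S_R\times S_R$, together with the uniform lower bound $|v_n|\gtrsim n$. Because $g_u$ depends on $u$ and $u_n$ itself grows, one must be careful that the error estimate $O(1/(u_nv_n^2))$ is controlled even for $u$ near the boundary of $S_R$; however, since $|u_n|\geq |u|$ eventually along the orbit (and $|u|>R$), this causes no real difficulty. Once invariance and the lower bound are in place, all remaining estimates are routine telescoping, and the convergence in both (a) and (b) is automatic.
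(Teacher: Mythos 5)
Your proposal is correct and follows essentially the same route as the paper: part (a) by the telescoping estimate $\PhiiG_{n+1}-\PhiiG_n=(0,O(1/v_n^2))$ with $|v_n|\gtrsim n$, and part (b) by reducing to part (a) for the inverse map and inverting the resulting incoming coordinate (the paper phrases this via the conjugate $H=\eta\circ G^{-1}\circ\eta$ with $\eta(u,v)=(-u,-v)$ and the identity $\PhioG_n\circ\eta\circ\PhiiH_n\circ\eta=\Id$, while you work with $G^{-1}$ on $\Wo$ directly, which is the same argument). You in fact supply details the paper leaves implicit, namely the forward invariance of $\Wi$ and the lower bound $|v_n|\geq c(n+|v|)$.
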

\begin{proof}
We prove first the convergence of the sequence going to $\PhiiG$. Then we use this result to prove the analogue for $\PhioG$. 
Denote $\PhiiG_n:= T_{(-n,-n)} \circ G^n$.
A simple computation shows:
$$
\PhiiG_{n+1} - \PhiiG_n = G^{n+1}(u_n,v_n)-G^n(u_n,v_n)-(1,1) = (0, O(1/v_n^2,1/(u_nv_n^2))).
$$
since $u_n$ and $v_n$ are $O(n)$ when $(u,v)\in \Wi$ then $\PhiiG_n$ converge uniformly in compacts.\\
For the outgoing coordinate, we denote $\PhioG_n:=G^n\circ  T_{(-n,-n)}$. Then we have the following:
\begin{align}\label{phiGH}
\PhioG_n \circ \eta \circ \PhiiH_n \circ \eta = \Id.
\end{align}
for $H=\eta\circ G^{-1} \circ \eta$ and $\eta(u,v) = (-u,-v)$. Since $H(u,v) = (u+1,v+1+O(1/v^2))$, we have that $\PhiiH_n$ converges, and therefore $\PhioG_n$ also. Since $\eta(\Wi)=\Wo$ and $H(\Wi) \subset \Wi$ then $\Wo\subset\PhioG(\Wo)$.
\end{proof}

\begin{figure}[h]
\begin{center}
\def\svgwidth{3.5in}{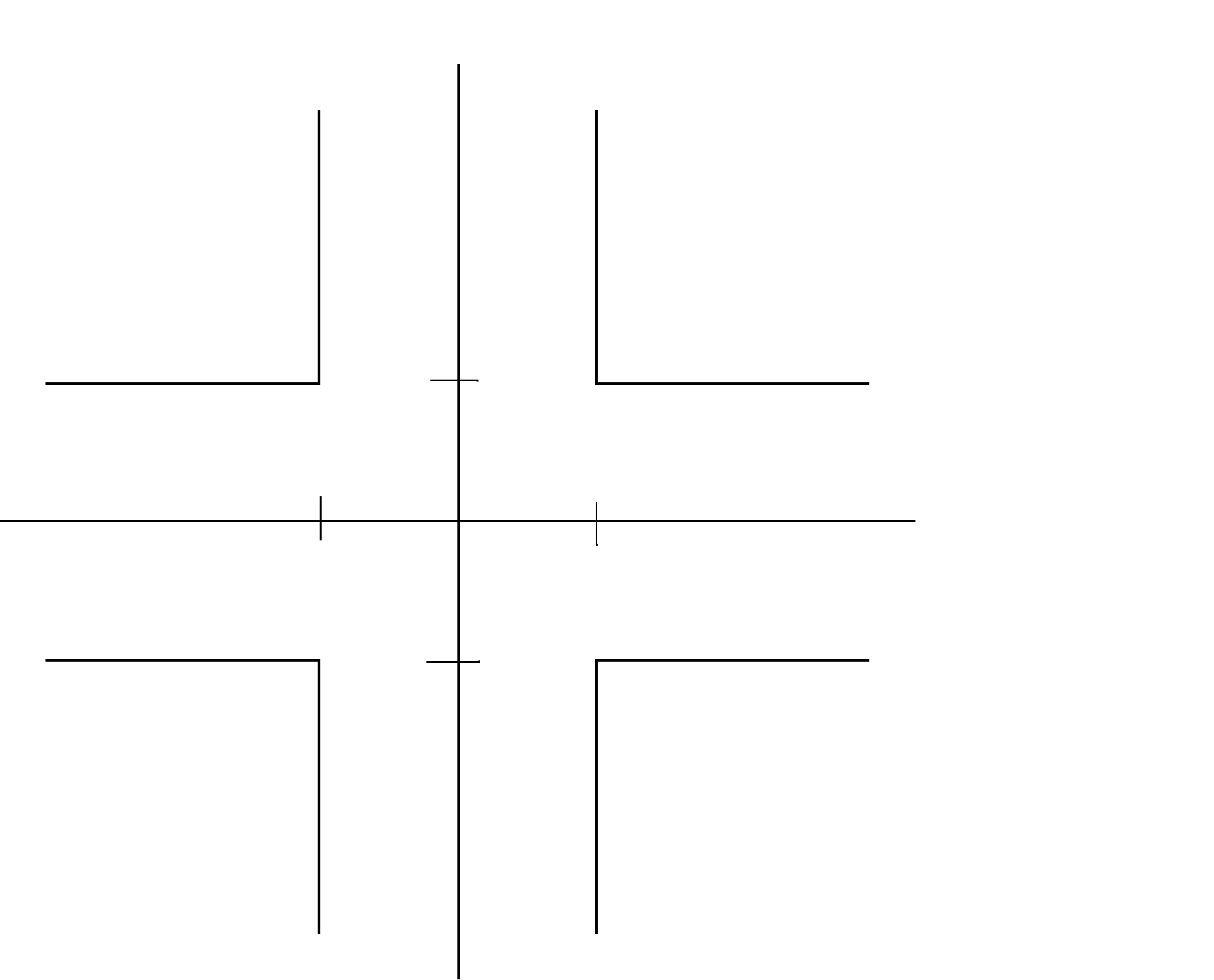}
\caption{Schematic figure of regions defined in \eqref{regionsinW}}
\label{fig:2dimensions}
\end{center}
\end{figure}

We also have:
\begin{thm}\label{GspecialWaWb}
Let $G$ be as in \eqref{Gspecial}. Then in each region we have that the following limits exist:
\begin{itemize}
\item[(a)] The following maps converge uniformly in compacts in $\Wa$ given by $\PsiaG:= \lim_{n\to\infty}T_{(n,-n)} \circ  G^n\circ T_{(-2n,0)}$ and we have the following diagram:
\begin{align*}
\begin{CD} 
\Wa @>(-1,g_\infty)>> \Wa \\ 
@V\PsiaG VV  @V \PsiaG VV   \\ 
\CC^2 @>T_{(-1,1)}>>  \CC^2
\end{CD}
\end{align*}
\item[(b)] The following maps converge uniformly in compacts in $\Wb$ given by $\PsibG:= \lim_{n\to\infty}T_{(-2n,0)} \circ  G^n\circ T_{(n,-n)}$ and we have the following diagram:
\begin{align*}
\begin{CD} 
L^{-1}(\Wb) @>L=(-1,g_\infty)>> \Wb \\ 
@A\PsibG AA  @A \PsibG AA   \\ 
E \subset \CC^2 @>T_{(-1,1)}>> T_{(-1,1)}(E) \subset \CC^2
\end{CD}
\end{align*}
\end{itemize}
\end{thm}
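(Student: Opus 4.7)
The plan is to mimic the proof of the preceding theorem (for $\Wi$ and $\Wo$), adapted to the asymmetric regions $\Wa$ and $\Wb$. By an explicit computation the defining sequences reduce to
\begin{align*}
\PsiaG_n(u,v) = (u, \, g_{u-2n}^{(n)}(v) - n), \qquad \PsibG_n(u,v) = (u, \, g_{u+n}^{(n)}(v-n)),
\end{align*}
where $g_u^{(n)}(v) := g_{u+n-1} \circ \cdots \circ g_u(v)$ denotes the iterated $v$-coordinate. The translations have been chosen precisely so that the first component is preserved, leaving only the $v$-component to analyze. The crucial observation is that every $u$-index in each composition has modulus of order $n$: for part (a) the indices range over $u-2n, \ldots, u-n$, and for part (b) over $u+n, \ldots, u+2n-1$. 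Since $g_u(v) = v + 1 + O(1/v^2) + O(1/(uv^2))$, each such $g_{u+\ast}$ is therefore an $O(1/n)$-perturbation of the one-dimensional limit map $g_\infty(v) := v + 1 + O(1/v^2)$.

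The strategy is to show
\begin{align*}
g_{u-2n}^{(n)}(v) - g_\infty^{(n)}(v) \longrightarrow 0 \quad \text{on } \Wa, \qquad g_{u+n}^{(n)}(v-n) - g_\infty^{(n)}(v-n) \longrightarrow 0 \quad \text{on } \Wb,
\end{align*}
uniformly on compacts, and then to invoke the one-dimensional Leau--Fatou result of Section 3 applied to $g_\infty$, giving $g_\infty^{(n)}(v) - n \to \phi^{\ii,g_\infty}(v)$ on $S_R$ and $g_\infty^{(n)}(v-n) \to \phi^{\oo,g_\infty}(v)$ on $-S_R$. This identifies the limits
\begin{align*}
\PsiaG(u,v) = (u, \phi^{\ii,g_\infty}(v)), \qquad \PsibG(u,v) = (u, \phi^{\oo,g_\infty}(v)),
\end{align*}
and both commuting diagrams then reduce to the trivial action $u \mapsto u-1$ on the first coordinate together with the one-dimensional Abel equations $\phi^{\ii,g_\infty}\circ g_\infty = \phi^{\ii,g_\infty}+1$ and $g_\infty \circ \phi^{\oo,g_\infty} = \phi^{\oo,g_\infty} \circ T_1$. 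The description of $L^{-1}(\Wb)$ and of the set $E$ in part (b) will fall out automatically from the properties of $\phi^{\oo,g_\infty}$.

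The perturbation claim I would handle by a standard telescoping estimate. Setting $e_k := g_{u-2n}^{(k)}(v) - g_\infty^{(k)}(v)$ for part (a) and splitting
\begin{align*}
e_{k+1} = \bigl(g_{u-2n+k} - g_\infty\bigr)\bigl(g_{u-2n}^{(k)}(v)\bigr) + g_\infty'(\xi_k)\, e_k,
\end{align*}
the first term is $O(1/(n(v+k)^2))$ because the $u$-index has magnitude $\Theta(n)$ and the orbit value is of order $|v+k|$; combined with $g_\infty'(\xi) = 1 + O(1/\xi^3)$, the recursion $|e_{k+1}| \le (1+C/(v+k)^3)|e_k| + C/(n(v+k)^2)$ starting from $e_0 = 0$ yields $|e_n| = O(1/n)$. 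For part (b) the same splitting applies to the orbit $g_{u+n}^{(k)}(v-n)$, which stays in $-S_R$ throughout; the relevant inner $v$-values have modulus at least $R + (n-k)$, so the series $\sum_k 1/(n(R+n-k)^2)$ is summable to $O(1/(nR))$. Alternatively part (b) can be deduced from part (a) by the involution trick used in the preceding theorem: setting $\eta(u,v) = (-u,-v)$ and $H := \eta \circ G^{-1} \circ \eta$, the algebraic identity $\PsibG_n = \eta \circ \PsiaH_n^{-1} \circ \eta$ reduces part (b) directly to part (a) applied to $H$. I expect the main technical obstacle to be the uniformity on compact subsets and checking that the orbits remain in the domain where $g_u$ and $g_\infty$ are both valid and close; once the $O(1/n)$ bound is established, the verification of the commuting diagrams is mechanical.
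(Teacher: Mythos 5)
Your proposal is correct, and your explicit unraveling of $\PsiaG_n$ and $\PsibG_n$ coincides with the paper's starting point, but you prove the key convergence by a genuinely different route. The paper does not estimate anything at this stage: it quotes from \cite{Vi14} the convergence of $\psio_n(v)=g_{-v+\alpha+2n-1}\circ\cdots\circ g_{-v+\alpha+n}(v-n)$ for every $\alpha$, together with the functional equation $\psio(v+1)=g_\infty(\psio(v))$, applies this with $\alpha=u-v$ to settle part (b), and then deduces part (a) from the involution identity $\PsiaG_n\circ\eta\circ\PsibH_n\circ\eta=\Id$ with $H=\eta\circ G^{-1}\circ\eta$ --- the mirror image of the identity you offer as an alternative for part (b). You instead compare the non-autonomous composition directly to the frozen map $g_\infty$ by a telescoping estimate, using that every index occurring in the composition has modulus of order $n$ (so each factor is an $O(1/(n|v|^2))$ perturbation of $g_\infty$), and then invoke the one-dimensional limits of Section 3; note that no logarithmic correction is needed precisely because the normal form \eqref{Gspecial} makes the $1/v$-coefficient of $g_\infty$ vanish, so $L_\alpha=\Id$ there. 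What your argument buys is a self-contained proof in place of the citation, and it makes explicit a point the paper's phrasing (``$\alpha=u-v$'') leaves ambiguous: the second coordinate of the limit is independent of $u$, so $\PsiaG(u,v)=(u,\phi^{\ii,g_\infty}(v))$ and $\PsibG(u,v)=(u,\phi^{\oo,g_\infty}(v))$, from which both commutative diagrams follow mechanically. Two small points to tighten if you write this up: if you take the involution route for one of the two parts, passing to the limit through the inverse requires uniform convergence on compacts plus injectivity of the limit map (a step the paper also elides); and the $\Theta(n)$ lower bound on the indices holds uniformly only because $(u,v)$ ranges over a compact set, so $|u\mp 2n\pm k|\geq n-C$ --- worth stating explicitly since the whole estimate hinges on it.
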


\begin{proof} 
Denote $\PsiaG_{n}:=T_{(n,-n)} \circ G^n \circ T_{(-2n,0)}$ and $\PsibG_{n}:=T_{(-2n,0)} \circ G^n \circ T_{(n,-n)}$. Unraveling we have:
$$
\PsibG_{n}(u,v)=(u,g_{u+2n-1}\circ\ldots \circ g_{u+n+1}\circ g_{u+n}(v-n))
$$
We have proven on \cite{Vi14} that the following map:
$$\psio_n(v) = g_{-v+\alpha+2n-1}\circ\ldots \circ g_{-v+\alpha+n+1}\circ g_{-v+\alpha+n}(v-n)
$$
converges for any $\alpha \in \CC$ and $v \in -S_R$ and that the limit $\psio(v+1) = g_{\infty}(\psio(v))$. 
Applying this result for $\alpha = u-v$ we obtain the convergence of the sequence $\PhiaG_n$ to the following map:
$$
\PsibG_n(u,v) \to (u, \psio(v)) 
$$
where $\psio(v+1) = g_{\infty}(\psio(v))$.

For the other  coordinate, we use the following identity:
\begin{align}\label{psiGH}
\PsiaG_n \circ \eta \circ \PsibH_n \circ \eta = \Id.
\end{align}
for $H=\eta\circ G^{-1} \circ \eta$, where $\eta(u,v) = (-u,-v)$. Since $H(u,v) = (u+1,v+1+O_u(1/v^2))$, we have that $\PsibH_n$ converges, and therefore $\PsiaG_n$ also. 
\end{proof}

Summarizing the results that we obtained for the map $F$:

\begin{thm}
Let $F$ be as in \eqref{Fspecial}. Let the sets $\Ui, \Uo, \Ua$ and  $\Ub$ defined as in \eqref{regionsinU}. Then:
\begin{itemize}
\item[(a)] The union of $\Ui, \Uo, \Ua$ and $\Ub$ together with the axes form a neighborhood of the origin in $\CC^2$.
\item[(b)] For any $p \in \Ui$ then $F^{n}(p) \in\Ui$ and $F^n$ converges to the origin uniformly in compacts. 
\item[(c)] For any $p \in \Uo$ then $F^{-n}(p) \in\Uo$ and $F^{-n}$ converges to the origin uniformly in compacts. 
\item[(d)] For any $p \in \Ua$ then $F^{-n}(p)$ converges to the $w$-axis, the invariant fiber of the map $F$. 
\item[(e)] For any $p \in \Ub$ then $F^n(p)$ converges to the $w$-axis, the invariant fiber of the map $F$.
\end{itemize}
\end{thm}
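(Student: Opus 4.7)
The plan is to reduce each assertion either to the one-dimensional Leau--Fatou theorem or to the convergence results for the conjugate map $G=I\circ F\circ I$ proved earlier in this section, and then transfer back through the involution $I(z,w)=(1/z,1/w)$. Once these reductions are in place each item becomes a short verification.

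For (a) I would invoke the classical Leau--Fatou theorem in one variable, which gives $\Ve\cup(-\Ve)\cup\{0\}$ as a neighborhood of $0$ in $\CC$, so the Cartesian product decomposes as the disjoint union of $\Ui,\Uo,\Ua,\Ub$ from \eqref{regionsinU} together with the two coordinate axes, producing a neighborhood of the origin in $\CC^2$. For (b) and (c) I would first check directly from $G(u,v)=(u+1,v+1+O(1/v^2,1/(uv^2)))$ that, for $R$ large enough, $G(\Wi)\subset\Wi$ and $G^{-1}(\Wo)\subset\Wo$: translation by $1$ preserves the sector $S_R$ once $R$ is big, and the higher-order correction is absorbed. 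Pulling back by $I$ then gives $F(\Ui)\subset\Ui$ and $F^{-1}(\Uo)\subset\Uo$. The convergence $F^n(p)\to 0$ in (b) follows from the existence of $\PhiiG$: for $p\in\Ui$ and $I(p)\in\Wi$ one has $G^n(I(p))=(n,n)+\PhiiG(I(p))+o(1)\to\infty$, hence $F^n(p)\to 0$. Part (c) is symmetric, using $\PhioG$ and $G^{-1}$.

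For (d) and (e) I would exploit the skew-product structure: $F^{\pm n}(z_0,w_0)=(\lambda^{\pm n}(z_0),\ast)$, so the first coordinate evolves autonomously under $\lambda$. In (d), $z_0\in -\Ve$ lies in the repelling petal of $\lambda$, so the one-dimensional Leau--Fatou theorem gives $\lambda^{-n}(z_0)\to 0$, which is exactly convergence to the $w$-axis; (e) is the symmetric statement for forward iteration starting at $z_0\in\Ve$. The step I expect to be the main obstacle, and the one the decomposition was really designed to handle, is verifying that the backward (resp.\ forward) orbit remains well defined and inside the domain of $F$ for every $n$, so that the convergence statement is not vacuous. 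For this I would appeal to Theorem \ref{GspecialWaWb}: the construction of $\PsiaG$ on $\Wa$ and $\PsibG$ on $\Wb$, combined with the symmetry identity \eqref{psiGH}, gives the quantitative control on the fiber coordinate needed to rule out escape, and transferring this back through $I$ completes the argument.
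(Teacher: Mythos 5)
Your handling of (a), (b) and (c) is correct and is essentially the route the paper intends: the theorem is stated as a summary of the two preceding theorems of this section, with no separate proof, and your reductions supply exactly the missing glue. Part (a) is the one-dimensional Leau--Fatou decomposition applied to each factor (only note that the union is not disjoint, since $\Ve\cap(-\Ve)\neq\emptyset$), and (b), (c) follow by transporting the invariance of $\Wi$ and $\Wo$ and the asymptotics $G^n(q)=(n,n)+\PhiiG(q)+o(1)$ back through $I$. One standard caveat, which the paper itself glosses over: the literal sector $S_R=\{|u|>R,\ |\Arg(u)|<3\pi/4\}$ is not invariant under $u\mapsto u+1$ near the corners where $|u|=R$ and $\Arg(u)$ is close to $\pm 3\pi/4$, so the petals must be taken in one of the slightly adjusted standard shapes; this is cosmetic and affects the paper and your argument equally.

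The genuine gap is in (d) and (e). You correctly isolate the real difficulty --- that $F^{\mp n}(p)$ must stay in the domain of the germ for all $n$ --- but the appeal to Theorem \ref{GspecialWaWb} does not close it. That theorem concerns the renormalized compositions $T_{(n,-n)}\circ G^n\circ T_{(-2n,0)}$ and $T_{(-2n,0)}\circ G^n\circ T_{(n,-n)}$, i.e.\ forward orbits of points whose base coordinate has first been pushed out to roughly $u-2n$, and the limit $\PsiaG$ conjugates the \emph{model} map $(-1,g_\infty)$, not $G$ or $G^{-1}$, to a translation; it gives no information about $G^{-n}(q)$ for a fixed $q\in\Wa$ or $G^{n}(q)$ for a fixed $q\in\Wb$. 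Moreover such orbits are genuinely not confined to the region: for $(u,v)\in\Wb$ the fiber coordinate of $G^n(u,v)$ is $v_n=v+n+o(n)$ with $v\in-S_R$ by \eqref{Gspecial}, and for an open set of initial $v$ (for instance $v$ near $-R$ on the negative real axis) the points $v+n$ enter the disc of radius $R$ after finitely many steps, i.e.\ $|w_n|$ exceeds $\epsilon$ and the orbit leaves the bidisc on which the germ $F$ is defined; backward orbits in $\Wa$ behave symmetrically. So no invariance of the form $F(\Ub)\subset\Ub$ or $F^{-1}(\Ua)\subset\Ua$ holds, and ``ruling out escape'' is not something Theorem \ref{GspecialWaWb} can deliver. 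What your first-coordinate argument does establish --- and this is the defensible content of (d), (e) --- is that the base coordinate is autonomous and $\la^{\mp n}(z_0)\to 0$ for $z_0$ in the appropriate petal, so the distance to the $w$-axis tends to $0$ for as long as the orbit is defined. To make ``for all $n$'' literal you would need either to shrink $\Ua$, $\Ub$ to the sub-regions where the fiber coordinate $v$ stays in $\{|v|>R\}$ under $v\mapsto v\pm n$ (e.g.\ large imaginary part), or to assume the fiber maps are globally defined and injective; as written, the proposal leaves this step unproved.
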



\section{General Case}

We are ready now to tackle the most general case. Consider now the following map:
\begin{align}
F(z,w) = (\lambda(z),f_z(w))
\end{align}
where $\lambda(z) = z + O(z^2), f_z(w) = w+O(|(z,w)|^2)$.

We focus on the following case:
\begin{align*}
F(z,w) = (z+a_2z^2+O(z^3),w+b_{2}w^2+O(|(z,w)|^3))
\end{align*}
where $a_2 \neq 0$ and $b_2 \neq 0$. 

By a simple change of coordinates we can assume $a_2 = -1$ and $b_2=-1$. Using a shear polynomial change of coordinates we can increase the power of $z$ on the second term. Similarly by using another change of coordinates we can increase the degree of the $z$ term that is multiplied by $w$. Therefore we can write $F$ as follows:
\begin{align}\label{genF}
F(z,w) = (z-z^2+O(z^3),w-w^2+O(w^3,zw^2,z^Mw,z^M))
\end{align}
for $M$ as large as we want.

We once again study the map at infinity by using the inverse map $(u,v):=I(z,w)=(1/z,1/w)$, so we consider $G=I \circ F \circ I$ and the fixed point is at infinity.
\begin{align}\label{genG}
G(u,v) =  (\rho(u),g_u(v))= \left(u+1+O\left(\frac{1}{u}\right),v+1+ O\left(\frac{1}{u},\frac{1}{v},\frac{v}{u^M},\frac{v^2}{u^M}\right)\right)
\end{align}

Let $S_R = \{|\zeta|>R, |\textrm{Arg}(\zeta)|<3\pi/4\}$ for $R$ large. Since the first coordinate of $G$ depends only on one variable, there exists two maps $\psi_1$ and $\psi_2$, defined respectively on $S_R$ and $-S_R$ such that $\psi_j^{-1} \circ \rho \circ \psi_j(u) = u+1$ for $u \in S_R$ when $j=1$, and for $u \in -S_R$ when $j=2$.

Recall that the maps $\psi_j$ are of the form $\psi_j(u) = u +O(\log(u))$, for $j=1,2$. We conjugate $G$ on $T_1 = S_R \times (S_R\cup (-S_R))$ by the change of coordinates $\Psi_1(u,v) = (\psi_1(u),v)$ and on the set $T_2 = (-S_R) \times (S_R\cup (-S_R))$ by the map  $\Psi_2(u,v) = (\psi_2(u),v)$.

Then we obtain the following conjugations of $G$:
\begin{align}
G_{j}(u,v) =  \left(u+1,v+1+ O\left(\frac{1}{u},\frac{\log(u)}{u^2},\frac{1}{v},\frac{v}{u^M},\frac{v\log(u)}{u^{M+1}},\frac{v^2}{u^M},\frac{v^2\log(u)}{u^{M+1}}\right)\right)
\end{align}
where each $G_j = (\Psi_j)^{-1}\circ G \circ \Psi_j$ is defined in $T_j$, for $j=1,2$. The specific higher order terms are in general different on each region.

Now, as before we separate each $T_i$ in two sets:
\begin{align*}
T_1 &= S_R \times (S_R\cup (-S_R))  = (S_R \times S_R) \cup (S_R \times (-S_R)) = \Wi \cup \Wb,\\
T_2 &= (-S_R) \times (S_R\cup (-S_R))  = (-S_R \times S_R) \cup (-S_R \times -S_R) = \Wa \cup \Wo.
\end{align*}

From now on, when we refer to a region $W$, we mean one of the possible four sets $\Wi,\Wb,\Wa$ or $\Wo$.

Consider the class of maps $\Theta(u,v) = (u,v+\alpha \log(u) + \beta \log(v))$. It is immediate to see that after choosing $R$ large enough then $\Theta$ is an injective transformation in each region $W$.

Now we can see that conjugating $G_j$ by $\Theta$ and choosing $\alpha$ and $\beta$ appropriately, we can get rid of the linear terms $O(1/u,1/v)$ and we obtain:
$$
\Theta^{-1} \circ G_j \circ \Theta(u,v) = \left(u+1,v+1+O\left(\frac{\log(u)}{u^2},\frac{1}{v^2},\frac{\log(u)}{v^2},\frac{\log(u)\log(v)}{v^3},\frac{v}{u^M},\frac{v^2}{u^M}\right)\right).
$$

To emphasize that each one of these maps is a different conjugation of $G$ on each set $W$, we write $\thetai$ the change of coordinates on $\Wi$, $\thetao$ on $\Wo$, $\thetaa$ on $\Wa$ and $\thetab$ on $\Wb$. We write $\Gi:= (\thetai)^{-1}\circ G_1 \circ \thetai$ the corresponding map defined on $\Wi$, $\Go:=(\thetao)^{-1}\circ G_2 \circ \thetao$ on $\Wo$, $\Ga:=(\thetaa)^{-1}\circ G_2 \circ \thetaa$ on $\Wa$ and $\Gb:=(\thetab)^{-1}\circ G_1 \circ \thetab$ on $\Wb$.

We have that the following subsets of $\Wi$ and $\Wo$ be the respective attracting and repelling basins for $\Gi$ and $\Go$:
\begin{align}
\widetilde{\Wi} &= \{(u,v) \in S_R\times S_R, |u|^{M+1}>|v|\}\\
\nonumber\widetilde{\Wo} &= \{(u,v)\in (-S_R)\times(-S_R),|u|^{M+1}>|v|\}
\end{align}

Let $\WiG=\Psi_1\circ \theta_i (\widetilde{\Wi})$, then we have that $G(\WiG) \subset \WiG$. Using results from the last section, we also have that we can conjugate $G_{\textrm{i}}$ to a translation on $\widetilde{\Wi}$ by using the limit:
\begin{align*}
\Phi^{\textrm{i},G_\textrm{i}}: \widetilde{\Wi} \to \CC^2, \quad \Phi^{\textrm{i},G_\textrm{i}}= \lim_{n\to\infty}T_{(-n,-n)}\circ G_\textrm{i}^n.
\end{align*}
If we unravel for $G$ then we obtain the following formula for the Fatou coordinate on the incoming basin for $G$:
\begin{align}
\PhiiG: \WiG \to \CC^2, \quad \PhiiG(u,w)= \lim_{n\to\infty}T_{(-n,-n)}\circ \thetai^{-1}\circ\Psi_1^{-1}\circ G^n
\end{align}

Similarly, if we define $\WoG=\Psi_2\circ \theta_o (\widetilde{\Wo})$ then we obtain that  $G(\WoG) \supset \WoG$. By the theorem on the last section we obtain a conjugation of $G$ on $\WoG$ to the translation:
\begin{align*}
\Phi^{\textrm{o},G_\textrm{o}}: \widetilde{\Wo} \to \CC^2, \quad \Phi^{\textrm{o},G_\textrm{o}}= \lim_{n\to\infty}G_\textrm{o}^n\circ T_{(-n,-n)}.
\end{align*}
Unraveling for $G$ then we obtain the following formula for the Fatou coordinate on the outgoing basin for $G$:
\begin{align}
\PhioG: \WoG \to \CC^2, \quad \PhioG(u,w)= \lim_{n\to\infty} G^n \circ \Psi_2 \circ \thetao \circ T_{(-n,-n)}.
\end{align}

We have therefore proven:
\begin{thm}\label{GgeneralWiWo} Let $G$ as above on \eqref{genG}. Then we can find incoming and outgoing Fatou coordinates for the respective incoming and outgoing basins for $G$ at infinity. 
\end{thm}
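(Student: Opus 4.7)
The proof reduces, via the chain of conjugations
$G=\Psi_j\circ\theta_\bullet\circ G_\bullet\circ\theta_\bullet^{-1}\circ\Psi_j^{-1}$
and Proposition \ref{cofc}, to establishing incoming and outgoing Fatou coordinates $\Phi^{\textrm{i},G_\textrm{i}}$ on $\widetilde{\Wi}$ for $G_\textrm{i}$ and $\Phi^{\textrm{o},G_\textrm{o}}$ on $\widetilde{\Wo}$ for $G_\textrm{o}$. Once these are in hand, the explicit formulas for $\PhiiG$ and $\PhioG$ follow by unraveling the two layers of conjugation and composing as in \eqref{coc}.

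For the incoming coordinate, I would first verify that $\widetilde{\Wi}$ is forward-invariant under $G_\textrm{i}$. Writing $G_\textrm{i}(u,v)=(u+1,v+1+E(u,v))$, the error $E$ is bounded by a sum of the terms displayed after the conjugation by $\Theta$. Since $|u+1|\geq|u|$ on $S_R$ and $|E|$ can be made arbitrarily small by choosing $R$ large, the defining condition $|u|^{M+1}>|v|$ is preserved and $(u+1,v+1+E)\in S_R\times S_R$. I would then consider the sequence $\Phi^{\textrm{i},G_\textrm{i}}_n=T_{(-n,-n)}\circ G_\textrm{i}^n$, whose successive difference telescopes to $(0,E(u_n,v_n))$ with $(u_n,v_n)=G_\textrm{i}^n(u,v)$. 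On a compact subset of $\widetilde{\Wi}$ one has $u_n=u+n$ exactly and, by an easy induction, $|v_n|=O(n)$; plugging these into the error bound yields
\begin{equation*}
|E(u_n,v_n)|=O\!\left(\tfrac{\log n}{n^2}\right)+O\!\left(\tfrac{1}{n^2}\right)+O(n^{1-M})+O(n^{2-M}),
\end{equation*}
which is summable provided $M\geq 4$. Uniform convergence on compacts of $\Phi^{\textrm{i},G_\textrm{i}}_n$ follows, and the limit satisfies the required functional equation $\Phi^{\textrm{i},G_\textrm{i}}\circ G_\textrm{i}=T_{(1,1)}\circ\Phi^{\textrm{i},G_\textrm{i}}$.

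For the outgoing coordinate I would reuse the symmetry argument from Section 4: the map $H_\textrm{o}=\eta\circ G_\textrm{o}^{-1}\circ\eta$ with $\eta(u,v)=(-u,-v)$ has the same structural form as an incoming map, so the identity analogous to \eqref{phiGH} reduces the outgoing convergence to the incoming case just established. Proposition \ref{cofc}, applied twice (once through $\theta_\bullet$, once through $\Psi_j$) together with Proposition \ref{inversemap}, then recovers the stated formulas for $\PhiiG$ on $\WiG=\Psi_1\circ\theta_\textrm{i}(\widetilde{\Wi})$ and $\PhioG$ on $\WoG=\Psi_2\circ\theta_\textrm{o}(\widetilde{\Wo})$.

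The main obstacle is the calibration of $\widetilde{\Wi}$ and the exponent $M$. The terms $v/u^M$ and $v^2/u^M$ in $G_\textrm{i}$ are the delicate ones because $v$ grows along the orbit; they become summable only once one knows simultaneously that the orbit remains in $\widetilde{\Wi}$ and that $|v_n|=O(n)$ on compacts, and both facts must be bootstrapped from the bound on $|E|$. Nailing down this bootstrap, together with checking that $\theta_\bullet$ is injective on each region $W$ and that the pull-back through $\Psi_j\circ\theta_\bullet$ carries compacts in $\widetilde{\Wi}$ (resp.\ $\widetilde{\Wo}$) to compacts in $\WiG$ (resp.\ $\WoG$), is the bulk of the remaining work.
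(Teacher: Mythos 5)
Your proposal follows essentially the same route as the paper: reduce via the conjugations $\Psi_j$ and $\theta_\bullet$ to the normalized maps $G_\textrm{i}$, $G_\textrm{o}$ on $\widetilde{\Wi}$, $\widetilde{\Wo}$, establish the incoming coordinate by the telescoping limit $T_{(-n,-n)}\circ G_\textrm{i}^n$ with summable differences, obtain the outgoing coordinate from the $\eta$-symmetry identity as in \eqref{phiGH}, and unravel through Proposition \ref{cofc}. In fact you are more explicit than the paper about the convergence estimates (the $O(n^{1-M})$, $O(n^{2-M})$ bounds and the invariance/bootstrap issue for the terms $v/u^M$, $v^2/u^M$, which the paper passes over silently), so the proposal is correct and, if anything, more complete.
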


We also obtain information on the behavior of $G$ on the regions $\Wa$ and $\Wb$, since we can apply theorem \ref{GspecialWaWb} to the maps $\Ga$ and $\Gb$ respectively.  Even though we do not have a conjugacy of $\Ga$ or $\Gb$ on the regions $\Wa$ and $\Wb$, we have that certain compositions of $G$ on these regions (together with translation maps) conjugate to the action $(-1,g_\infty)$. 

On theorem \ref{GspecialWaWb} we proved that on the region $\Wa$, the map given by $\Psia:= \lim_{n\to\infty}T_{(n,-n)} \circ  \Ga ^n\circ T_{(-2n,0)}$ satisfies $\Psia \circ (-1,g_\infty) = T_{(-1,1)}\circ \Psia$. Since $\Ga = (\thetaa)^{-1}\circ G_2 \circ \thetaa$, then $\Ga^n = (\thetaa)^{-1}\circ (\Psi_2)^{-1} \circ G^n \circ \Psi_2 \circ \thetaa$, and  $\Psia= \lim_{n\to\infty}T_{(n,-n)} \circ   (\thetaa)^{-1}\circ (\Psi_2)^{-1} \circ G^n \circ \Psi_2 \circ \thetaa\circ T_{(-2n,0)}$ converges and satisfy the same commutative diagram.

Similarly, on the region $\Wb$, the map given by $\Psib:= \lim_{n\to\infty}T_{(-2n,0)} \circ  \Gb ^n\circ T_{(n,-n)}$ satisfies $\Psib \circ (-1,g_\infty) = T_{(-1,1)}\circ \Psib$. Since $\Gb = (\thetab)^{-1}\circ G_1 \circ \thetab$, then $\Gb^n = (\thetab)^{-1}\circ (\Psi_1)^{-1} \circ G^n \circ \Psi_1 \circ \thetab$, then  $\Psib= \lim_{n\to\infty}T_{(-2n,0)} \circ (\thetab)^{-1}\circ (\Psi_1)^{-1} \circ G^n \circ \Psi_1 \circ \thetab\circ T_{(n,-n)}$ converges and satisfy the same commutative diagram.

We have therefore proven 
\begin{thm}\label{GgeneralWaWb} Let $G$ as above on \eqref{genG}. Then on the regions: 
\begin{align}
\widetilde{\Wa} &= \{(u,v) \in -S_R\times S_R, |u|^{M+1}>|v|\}\\
\nonumber\widetilde{\Wb} &= \{(u,v)\in S_R\times(-S_R),|u|^{M+1}>|v|\},
\end{align}
the following limits exist: $\Psia= \lim_{n\to\infty}T_{(n,-n)} \circ   (\thetaa)^{-1}\circ (\Psi_2)^{-1} \circ G^n \circ \Psi_2 \circ \thetaa\circ T_{(-2n,0)}$ and $\Psib=\lim_{n\to\infty}T_{(-2n,0)} \circ (\thetab)^{-1}\circ (\Psi_1)^{-1} \circ G^n \circ \Psi_1 \circ \thetab\circ T_{(n,-n)}$ and the second coordinate conjugates $g_\infty$ to the translation $T_{1}$.
\end{thm}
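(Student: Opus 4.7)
The plan is to deduce the theorem from Theorem \ref{GspecialWaWb} by applying it to the conjugated maps $\Ga$ and $\Gb$, and then unravelling the conjugation identities to recover the formulas involving $G$ itself. The preceding discussion has already set up the relevant ingredients: $\Ga$ and $\Gb$ share the normal form $(u+1,v+1+\text{error})$ with the map of \eqref{Gspecial}, and $\Ga^n=(\thetaa)^{-1}\circ(\Psi_2)^{-1}\circ G^n\circ\Psi_2\circ\thetaa$, with the analog for $\Gb$.

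The core step is to verify that the conclusion of Theorem \ref{GspecialWaWb} still applies to $\Ga$ on $\widetilde{\Wa}$ (and to $\Gb$ on $\widetilde{\Wb}$), despite the additional error terms $O(\log(u)/u^2,\;1/v^2,\;\log(u)/v^2,\;\log(u)\log(v)/v^3,\;v/u^M,\;v^2/u^M)$ in comparison with \eqref{Gspecial}. I would follow the same strategy as in the proof of Theorem \ref{GspecialWaWb} and show that the telescoping differences
$$T_{(n+1,-n-1)}\circ\Ga^{n+1}\circ T_{(-2n-2,0)}-T_{(n,-n)}\circ\Ga^n\circ T_{(-2n,0)}$$
decay summably in $n$ on compact subsets of $\widetilde{\Wa}$. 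This reduces to estimating the error of $\Ga$ at the intermediate iterates $(u_k,v_k)$ with $u_k=u-2n+k$ and $v_k=v+k+o(k)$ for $0\le k\le n$, where both coordinates stay large in absolute value. The constraint $|u|^{M+1}>|v|$, combined with the freedom to take $M$ as large as desired, is exactly what ensures that the mixed terms $v_k/u_k^M$ and $v_k^2/u_k^M$ are summable in $k$; the purely one-variable terms such as $1/v_k^2$ and $\log(u_k)/u_k^2$ are already summable without any additional constraint.

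Once convergence is in hand, the commutation relation $\Psi^{\textrm{a},\Ga}\circ(-1,g_\infty)=T_{(-1,1)}\circ\Psi^{\textrm{a},\Ga}$ is inherited from the proof of Theorem \ref{GspecialWaWb}, via the identity \eqref{psiGH} relating $\Ga$ to a conjugate of its inverse. Substituting $\Ga^n=(\thetaa)^{-1}\circ(\Psi_2)^{-1}\circ G^n\circ\Psi_2\circ\thetaa$ into the limit then yields the stated formula for $\Psia$; the argument for $\Psib$ on $\widetilde{\Wb}$ is entirely symmetric. The main obstacle is the summability step: one must confirm that all iterates $\Ga^k\circ T_{(-2n,0)}(u,v)$ stay inside a neighborhood of $\widetilde{\Wa}$ on which the above error bounds are valid, which is exactly what the hypothesis $|u|^{M+1}>|v|$, with $M$ chosen large, is designed to guarantee.
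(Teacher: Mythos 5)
Your proposal follows essentially the same route as the paper: apply Theorem \ref{GspecialWaWb} to the conjugated maps $\Ga$ and $\Gb$, and then substitute $\Ga^n=(\thetaa)^{-1}\circ(\Psi_2)^{-1}\circ G^n\circ\Psi_2\circ\thetaa$ (and its analogue for $\Gb$) to recover the stated limits and the commutation with $T_{(-1,1)}$. In fact you supply more detail than the paper does, since the paper simply asserts that Theorem \ref{GspecialWaWb} applies to $\Ga$ and $\Gb$, whereas you explicitly address the summability of the extra error terms $v/u^M$, $v^2/u^M$ under the constraint $|u|^{M+1}>|v|$ with $M$ large.
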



\end{document}